\documentclass[12pt]{article}
\usepackage{amsmath,amssymb,amsthm}
\def\Cmath{\mathbb{C}}

\def\Rmath{\mathbb{R}}

%%%%%%%%%%%%%%%%%%%%%%%%%%%%%%%%%%%%%%%%%%%%%%%%
\newtheorem{lemma}{Lemma}
\newtheorem{remark}{Remark}

%title{TITRE
\title{Two examples about zero torsion linear maps on Lie algebras
%\thanks{\textit{Math. Subj. Class. [2000]}  : 17B30 }
\thanks{\textit{Math. Subj. Class. [2000]}  : 17B30.
 \textit{Key words} : Complex structures, $CR$-structures, zero torsion, Heisenberg Lie algebra, $\mathfrak{sl}(2,\Rmath)$.
}
}
\author{L. Magnin \\
Institut Math\'{e}matique de Bourgogne
\thanks{UMR CNRS 5584, Universit\'{e} de Bourgogne, BP 47870, 21078 Dijon Cedex, France.}
\\
\texttt{magnin@u-bourgogne.fr}
}
\date{\today}
\setlength{\textwidth}{14cm}
\setlength{\textheight}{20cm}
\begin{document}
\maketitle
\begin{abstract}
The question of whether or not any zero torsion linear map on a non abelian real Lie algebra
$\mathfrak{g}$
is necessarily an extension of some $CR$-structure is considered and answered in
the negative. Two examples are provided, one in the negative and one in the positive.
In both cases, the computation up to equivalence of all zero torsion linear maps on
$\mathfrak{g}$ is used for an explicit description of the equivalence classes of
integrable complex structures on
$\mathfrak{g} \times \mathfrak{g}.$
\end{abstract}
%%%%%%%%%%%%%%%%%%%%%%%%%%%%%%%%%%%%%%%%%%%%%%%%%%%%%%%%%%%%%%%%%%%%%%%%%%%%%%%%%%%%%%%
%%%%%%%%%%%%%%%%%%%%%%%%%%%%%%%%%%%%%%%%%%%%%%%%%%%%%%%%%%%%%%%%%%%%%%%%%%%%%%%%%%%%%%%%%%
%%%%%%%%%%%%%%%%%%%%%%%%%%%%%%%%%%%%%%%%%%%%%%%%%%%%%%%%%%%%%%%%%%%%%%%%%%%%%%%%%%%%%%%%
\section{Introduction.}
Given a real Lie algebra $\mathfrak{g},$ the determination up to equivalence of
zero torsion linear maps from $\mathfrak{g}$ to $\mathfrak{g}$ plays an important
role in the computation of complex structures on direct products involving
$\mathfrak{g}$
(\cite{u2revisited}). In the present note, we consider the question of
whether or not
any such zero torsion linear map
for non abelian $\mathfrak{g}$ is necessarily an extension of some
$CR$-structure.
We answer the question in the negative by computing (up to equivalence) all zero torsion linear maps
from the real 3-dimensional Heisenberg Lie algebra $\mathfrak{n}$ into itself. The result is then used to exhibit
a complete set of
representatives  of equivalence classes of  complex structures on $\mathfrak{n}\times \mathfrak{n}$.
We also compute
all zero torsion linear maps
on
 $\mathfrak{sl}(2,\Rmath).$ In that case they are extensions of $CR$-structures. We deduce
a complete set of
representatives  of equivalence classes of  complex structures on
 $\mathfrak{sl}(2,\Rmath) \times  \mathfrak{sl}(2,\Rmath).$
\section{Preliminaries.}
Let $G_0$ be a connected finite dimensional real Lie group, with
Lie algebra $\mathfrak{g}.$
A linear map  $J \, : \, \mathfrak{g} \rightarrow  \mathfrak{g}$
is said to have \textit{zero torsion} if it satisfies the condition
\begin{equation}
\label{Jnotorsion}
 [{J} X, {J}Y]-[X,Y]-{J}[{J}X,Y]-J[X,{J}Y] =0 \quad \forall X,Y \in \mathfrak{g}.
 \end{equation}
If $J$ has zero torsion and  satisfies  in addition $J^2=-1,$
 $J$ is an (integrable) complex structure on
$\mathfrak{g}.$
That means that $G_0$
can be given the structure of a complex manifold with the same underlying
real structure and such that the canonical complex structure on $G_0$ is the left invariant almost
complex structure
$\hat{J}$ associated to $J$
(For more details, see \cite{artmagnin1}).
To any (integrable) complex structure $J$ is associated
the complex subalgebra
$\mathfrak{m}=\left\{ \tilde{X} := X - i J X \, ; X \in \mathfrak{g} \right\}$
of the complexification $\mathfrak{g}_\Cmath$ of $\mathfrak{g}.$
In that way, (integrable) complex structures can be identified with
complex subalgebras $\mathfrak{m} $ of
$\mathfrak{g}_\Cmath$  such that
$\mathfrak{g}_\Cmath =\mathfrak{m} \oplus \bar{\mathfrak{m}}$, bar denoting conjugation.
$J$ is said to be abelian if $\mathfrak{m}$ is.
When computing the matrices of the zero torsion maps in some
fixed basis
 $(x_j)_{ 1 \leqslant j \leqslant n}$
of $\mathfrak{g}$,
we will denote by
$ij|k$ ($1 \leqslant i,j,k \leqslant n$)
the torsion equation
obtained by projecting on $x_k$ the equation
(\ref{Jnotorsion}) with $X=x_i, Y=x_j.$
The automorphism group
$\text{Aut } \mathfrak{g}$
of $\mathfrak{g}$ acts on the set
of all  zero torsion linear maps and on the set
%$\mathfrak{X}_{\mathfrak{g}}$
of all complex structures on  $\mathfrak{g}$ by
$J \mapsto \Phi \circ J \circ \Phi^{-1} \quad \forall \Phi \in
\text{Aut } \mathfrak{g}.$
Two $J,J^{\prime}$
on   $\mathfrak{g}$  are said to be \textit{equivalent}
(notation: $J \equiv J^{\prime}$)
if they are on the same
$\text{Aut } \mathfrak{g}$ orbit.
For complex structures and   simply connected $G_0,$
this amounts to the existence of an $f \in \text{Aut } G_0$ such that
$f : (G_0,J) \rightarrow  (G_0,J^{\prime})$ is biholomorphic.
%%%%%%%%%%%%%%%%%%%%%%%%%%%%%%%%%%%%%%%%%%%%%%%%%%%%%%%%%%%%%%%%%%%%%%%%%%%%%%%%%%%%%%
\section{Case of
 $\mathfrak{sl}(2,\Rmath).$}
Let $G=SL(2,\Rmath)$ denote the  Lie group of
real  $2\times 2$ matrices with determinant 1
\begin{equation}
\label{sigma}
\sigma = \begin{pmatrix} a&b \\ c&d  \end{pmatrix} \quad , \quad ad-bc=1.
\end{equation}
Its Lie algebra
 $\mathfrak{g} =\mathfrak{sl}(2,\Rmath)$
 consists of the zero trace
real $2\times 2$ matrices
$$X = \begin{pmatrix} x&y \\ z&-x  \end{pmatrix}= xH+yX_+  +zX_- $$
with  basis
$H = \left(\begin{smallmatrix} 1&0 \\ 0&-1  \end{smallmatrix}\right)$,
$X_+ = \left(\begin{smallmatrix} 0&1 \\ 0&0 \end{smallmatrix}\right)$,
$X_- = \left(\begin{smallmatrix} 0&0 \\ -1&0 \end{smallmatrix}\right)$
and commutation relations
\begin{equation}
\label{relationssl2}
[H,X_+]=2X_+, \; [H,X_-]=-2X_-, \; [X_+,X_-]=H.
\end{equation}
Beside the basis $(H,X_+,X_-),$  we shall also make use of  the basis
$(Y_1,Y_2,Y_3)$
where
$Y_1=\frac{1}{2} H,$
$Y_2=\frac{1}{2} (X_+ -X_-),$
$Y_3=\frac{1}{2} (X_+  + X_-),$ with
 commutation relations
\begin{equation}
\label{relationssl2bis}
[Y_1,Y_2]=Y_3,
[Y_1,Y_3]=Y_2,
[Y_2,Y_3]=Y_1.
\end{equation}
The adjoint representation of $G$ on $\mathfrak{g}$ is given by
$\text{Ad}(\sigma) X = \sigma X \sigma^{-1}.$
The matrix $\Phi$ of $\text{Ad}(\sigma)$
($\sigma$ as in (\ref{sigma}))
in the basis $(H,X_+,X_-)$ is
\begin{equation}
\Phi =
\begin{pmatrix}
1+2bc&-ac&bd\\
-2ab&a^2&-b^2\\
2cd&-c^2&d^2
\end{pmatrix}.
\end{equation}
The adjoint group $\text{Ad}(G)$ is the identity component of
$\text{Aut } \mathfrak{g}$
and one has
\begin{equation}
\label{Aut(sl2)}
\text{Aut } \mathfrak{g}
 = \text{Ad}(G) \cup  \Psi_0 \text{Ad}(G) \quad , \quad
 \Psi_0= \text{diag}(1,-1,-1).
\end{equation}
The adjoint action of $G$ on $\mathfrak{g}$ preserves the form $x^2 +yz.$
The orbits are :
\\ (i) the trivial orbit $\{0\};$
\\ (ii) the upper sheet $z>0$ of the cone $x^2+yz=0$ (orbit of $X_-$);
\\ (iii) the lower sheet $z<0$ of  the cone $x^2+yz=0$ (orbit of $-X_-$);
\\ (iv) for all $s >0$ the one-sheet hyperboloid $x^2+yz=s^2$ (orbit of $s H$);
\\ (v)
for all $s >0$
the upper sheet $z>0$ of the hyperboloid $x^2+yz=-s^2$
(orbit of $s(-X_+ +X_-$));
\\ (vi)
for all $s >0$
the lower sheet $z<0$ of the hyperboloid $x^2+yz=-s^2$ (orbit of $s(X_+ -X_-$)).
\\
The orbits of $\mathfrak{g}$ under the whole
$\text{Aut } \mathfrak{g}$
are, beside $\{0\}$:
  \\ (I) the cone $x^2+yz=0$ (orbit of $X_-$);
\\ (II) the one-sheet hyperboloid $x^2+yz=s^2$ (orbit of $s H$)
($s >0$);
\\ (III) the two-sheet hyperboloid $x^2+yz=-s^2$ (orbit of $s(X_+ -X_-$))
($s >0$).

\begin{lemma}
\label{sl2notorsion}
 Let $\mathfrak{g}=
 \mathfrak{sl}(2,\Rmath),$ and
  $J \, : \,
 \mathfrak{g}
\rightarrow
 \mathfrak{g}
$ any linear map.
 $J$ has zero torsion
if and only if it is equivalent to
the endomorphism defined  in the basis
$(Y_1,Y_2,Y_3)$
(resp.
$(H,X_+,X_-)$)
by
\begin{equation}
\label{Jsl2notorsionbis}
J_*(\lambda)=\begin{pmatrix}
0&0&-1\\
0&\lambda&0\\
1&0&0
\end{pmatrix}
\quad , \quad \lambda \in \Rmath \quad ,
\end{equation}
 $J_*(\lambda) \not \equiv  J_*(\mu)$ for $\lambda \neq \mu$

 (resp.
\begin{equation}
\label{Jsl2notorsion}
J(\alpha)=\begin{pmatrix}
0&-\frac{1}{2}&-\frac{1}{2}\\
1&\alpha&-\alpha\\
1&-\alpha&\alpha
\end{pmatrix}
\quad , \quad \alpha \in \Rmath  \quad ,
\end{equation}
 $J(\alpha) \not \equiv  J(\beta)$ for $\alpha \neq \beta$).
\end{lemma}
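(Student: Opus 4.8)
The strategy is to write $J$ as a general $3\times 3$ real matrix in the basis $(Y_1,Y_2,Y_3)$, substitute into the torsion condition (\ref{Jnotorsion}) using the relations (\ref{relationssl2bis}), and reduce the resulting polynomial system; then use the $\Aut{\mathfrak g}$-action to bring solutions to the normal form (\ref{Jsl2notorsionbis}). First I would set $JY_1 = aY_1+bY_2+cY_3$, $JY_2 = dY_1+eY_2+fY_3$, $JY_3 = gY_1+hY_2+kY_3$ and compute the nine scalar equations $ij|k$ obtained by projecting (\ref{Jnotorsion}) onto each basis vector for each pair $x_i,x_j\in\{Y_1,Y_2,Y_3\}$. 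Because (\ref{relationssl2bis}) is the structure of $\mathfrak{so}(3)$ (totally antisymmetric structure constants), the bilinear map $(X,Y)\mapsto[X,Y]$ is just (up to sign conventions) the cross product on $\Rmath^3$, so (\ref{Jnotorsion}) becomes the vector identity $JX\times JY - X\times Y - J(JX\times Y) - J(X\times JY)=0$; this is the computational heart and I expect it to be the main obstacle — extracting a clean classification of the solution set from these quadratic equations.

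Next I would solve the system. The natural device is to exploit the $\Aut{\mathfrak g}$-action \emph{before} finishing the algebra: by (\ref{Aut(sl2)}) and the orbit description, I may use $\text{Ad}(G)$ (which in the $(Y_1,Y_2,Y_3)$ picture acts as a copy of $SO(2,1)$, or on the compact part as rotations) together with $\Psi_0$ to normalize, say, the image of a chosen vector or the "axis" of $J$. Concretely I expect the equations to force $J$ to have a distinguished invariant line and to act on a complementary plane as a map squaring to $-\mathrm{id}$ on that plane (hence a rotation by $\pm\pi/2$ after scaling), while on the invariant line it is multiplication by some scalar $\lambda$; the rotation-by-$\pi/2$ block on the plane $\langle Y_1,Y_3\rangle$ and the eigenvalue $\lambda$ on $Y_2$ give exactly (\ref{Jsl2notorsionbis}). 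The sign of the $\pi/2$-rotation is absorbed by $\Psi_0$, and the choice of which coordinate plane carries the rotation is absorbed by $\text{Ad}(G)$, so every zero-torsion $J$ is equivalent to some $J_*(\lambda)$.

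Then I would verify the converse — that each $J_*(\lambda)$ genuinely satisfies (\ref{Jnotorsion}) — by direct substitution (a short check, since $J_*(\lambda)^2 = \mathrm{diag}(-1,\lambda^2,-1)$ and the cross-product identity collapses quickly). Finally, for the inequivalence claim $J_*(\lambda)\not\equiv J_*(\mu)$ when $\lambda\neq\mu$, I would produce an $\Aut{\mathfrak g}$-invariant distinguishing $\lambda$: the eigenvalue $\lambda$ is an eigenvalue of $J$, and since conjugation preserves the characteristic polynomial, the multiset of eigenvalues $\{i,-i,\lambda\}$ of $J_*(\lambda)$ is an invariant; as $\lambda$ is the unique real eigenvalue, distinct $\lambda$ give non-conjugate maps — even under the full $\Aut{\mathfrak g}$, since the extra component $\Psi_0\,\text{Ad}(G)$ still acts by conjugation. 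The passage to the $(H,X_+,X_-)$ basis is then just the change-of-basis $Y_1=\tfrac12 H$, $Y_2=\tfrac12(X_+-X_-)$, $Y_3=\tfrac12(X_++X_-)$ applied to (\ref{Jsl2notorsionbis}), which yields (\ref{Jsl2notorsion}) with $\alpha=\lambda$ (matching constants to be checked), and the inequivalence statement transfers verbatim.
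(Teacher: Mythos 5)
Your overall strategy (write the nine quadratic torsion equations, use a real eigenvalue of $J$ together with the orbit structure of $\text{Aut}\,\mathfrak{g}$ to normalize, then solve) is the same as the paper's, and your inequivalence argument via the unique real eigenvalue $\lambda$ of $J_*(\lambda)$ (spectrum $\{\lambda,i,-i\}$ is a conjugation invariant) is correct and in fact more explicit than the paper's. But there are two concrete problems. First, the relations (\ref{relationssl2bis}) are \emph{not} the $\mathfrak{so}(3)$ structure and the structure constants are not totally antisymmetric: $[Y_1,Y_3]=+Y_2$, not $-Y_2$. The invariant quadratic form in the $(Y_1,Y_2,Y_3)$ coordinates is $u^2-v^2+w^2$, of signature $(2,1)$, and $\text{Ad}(G)$ is a copy of $SO(2,1)^0$, not $SO(3)$. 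If you substitute the Euclidean cross product into (\ref{Jnotorsion}) you will get the wrong polynomial system, so this is not a harmless sign convention.

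Second, and more seriously, the step ``the choice of which coordinate plane carries the rotation is absorbed by $\text{Ad}(G)$'' is exactly what cannot be assumed: since $\text{Ad}(G)$ preserves the Lorentzian form, it can only move the invariant line of $J$ onto $\Rmath Y_2$ if that line is already of the same causal type, i.e.\ lies on a two-sheet hyperboloid $x^2+yz=-s^2$ (orbit (III)). The substantive content of the paper's proof is precisely the elimination of the other two possibilities --- eigenvector on the null cone (orbit (I)) or on a one-sheet hyperboloid (orbit (II)) --- each of which is shown to contradict the torsion equations, followed by the actual resolution of the quadratic system in the remaining case. Your plan replaces all of this by ``I expect the equations to force $J$ to have a distinguished invariant line \ldots acting on a complementary plane as a map squaring to $-\mathrm{id}$,'' which is the conclusion, not an argument; as written the proposal defers the entire computational and case-analytic core of the lemma. (Minor point: the change of basis gives $\alpha=\lambda/2$, not $\alpha=\lambda$, as you suspected might need checking.)
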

\begin{proof}
Let $J = (\xi^i_j)_{1 \leqslant i,j \leqslant 3}$
in the basis $(H,X_+,X_-).$
The 9 torsion equations are
in the basis $(H,X_+,X_-)$:
\begin{eqnarray*}
12|1
&\quad
2 (\xi^2_2 + \xi^1_1) \xi^1_2 + (\xi^2_2 - \xi^1_1) \xi^3_1 - (\xi^2_1
+ 2 \xi^1_3) \xi^3_2=0,\\
12|2
&\quad
 2 (\xi^2_1 \xi^1_2 + 1 + (\xi^2_2)^2) - \xi^3_1 \xi^2_1 - 2 \xi^3_2 \xi^2
_3=0,\\
12|3
&\quad
  (\xi^3_1 + 2 \xi^1_2) \xi^3_1 - 2 (\xi^2_2 + 2 \xi^1_1) \xi^3_2 + 2 \xi^3_3 \xi^3_2=0,\\
13|1
&\quad
(\xi^2_1 - 2 \xi^1_3) \xi^1_1 + 2 \xi^2_3 \xi^1_2 + \xi^3_1 \xi^2_3 - (
\xi^2_1 + 2 \xi^1_3) \xi^3_3=0,\\
13|2
&\quad
2 (\xi^2_2 - 2 \xi^1_1) \xi^2_3 + (\xi^2_1 + 2 \xi^1_3) \xi^2_1 - 2 \xi^
3_3 \xi^2_3=0,\\
13|3
&\quad
\xi^3_1 \xi^2_1 - 2 \xi^3_1 \xi^1_3 - 2 + 2 \xi^3_2 \xi^2_3 - 2 (\xi^3_3)^2=0,\\
23|1
&\quad
 4 \xi^1_3 \xi^1_2 - 1 - \xi^2_2 \xi^1_1 - \xi^3_2 \xi^2_3 + (\xi^2_2 - \xi^1_1) \xi^3_3=0,\\
23|2
&\quad
 4 \xi^2_3 \xi^1_2 - (\xi^2_2 + \xi^3_3) \xi^2_1=0,\\
23|3
&\quad
4 \xi^3_2 \xi^1_3 - (\xi^2_2 + \xi^3_3) \xi^3_1=0.
\end{eqnarray*}
$J$ has at least one real eigenvalue $\lambda.$ Let $v \in \mathfrak{g},$ $v \neq 0,$ an
eigenvector  associated to $\lambda.$
From the classification of the
$\text{Aut } \mathfrak{g}$ orbits of
$\mathfrak{g},$ we then get 3 cases according to whether $v$ is on the orbit (I),(II),(III)
(in the cases (II), (III) one may choose $v$ so that $s=1$).
\\Case 1. There exists
$\varphi \in \text{Aut } \mathfrak{g}$ such that $v= \varphi(X_-).$ Then,
replacing $J$ by $\varphi^{-1} J \varphi,$
we may suppose
$\xi^1_3=\xi^2_3=0.$
That case is impossible from $13|2$ and $13|3.$
\\Case 2. There exists
$\varphi \in \text{Aut } \mathfrak{g}$ such that $v= \varphi(H).$ Then we may suppose
$\xi^2_1=\xi^3_1=0.$ Then from $12|2$, $\xi^2_3\xi^3_2 \neq 0,$ and $23|2$, $23|3$
yield $\xi^1_2=\xi^1_3=0.$
Then $12|3$ and $13|2$ successively give  $\xi^3_3=\xi^2_2+2\xi^1_1$ and
 $\xi^1_1=0.$ Now $12|2$ and $23|1$ read resp. $-\xi^2_3\xi^3_2 + (\xi^2_2)^2 +1 =0,$
 and
 $\xi^2_3\xi^3_2 - (\xi^2_2)^2 +1 =0.$
 Hence that case is impossible.
\\Case 3. There exists
$\varphi \in \text{Aut } \mathfrak{g}$ such that $v= \varphi(X_+ -X_-).$
Then we may suppose that $v=X_+ -X_-.$
Now instead of the basis $(H,X_+,X_-),$  we consider the basis
$(Y_1,Y_2,Y_3).$
%where
%$Y_1=\frac{1}{2} H,$
%$Y_2=\frac{1}{2} (X_+ -X_-),$
%$Y_3=\frac{1}{2} (X_+ + X_-),$ with
%$[Y_1,Y_2]=Y_3,
%[Y_1,Y_3]=Y_2,
%[Y_2,Y_3]=Y_1.$
 The matrix of $J$ in the basis
$(Y_1,Y_2,Y_3)$
has the form
\begin{equation*}
J_* =
\begin{pmatrix}
\eta^1_1&0&\eta_3^1\\
\eta^2_1&\lambda&\eta_3^2\\
\eta^3_1&0&\eta_3^2
\end{pmatrix}.
\end{equation*}
Then the 9 torsion equations
$*ij|k$ (the star is to underline that the new basis is in use)
for $J$ in that basis are:
\begin{eqnarray*}
*12|1
&\quad
 (\eta^3_1+ \eta^1_3 )\lambda - (\eta^3_1-\eta^1_3) \eta^1_1
=0,\\
*12|2
&\quad
 (\eta^1_1+\lambda) \eta^2_3 - \eta^2_1 \eta^3_1
=0,\\
*12|3
&\quad
 \eta^1_1 \lambda-1 +(\eta^3_1)^2 -(\eta^1_1 + \lambda)\eta^3_3
=0,\\
*13|1
&\quad
\eta^2_3 \eta^1_3 +\eta^2_1 \eta^1_1 +  \eta^2_3 \eta^3_1 - \eta^2_1 \eta^3_3
=0,\\
*13|2
&\quad
\eta^1_1 \lambda +1 + (\eta^2_1)^2 + (\eta^2_3)^2+   \eta^3_1 \eta^1_3 - (\eta^1_1
-\lambda)\eta^3_3
=0,\\
*13|3
&\quad
\eta^2_3 \eta^1_1 -\eta^2_1 (\eta^1_3 + \eta^3_1) - \eta^2_3 \eta^3_3
=0,\\
*23|1
&\quad
\eta^1_1 \lambda +1 - (\eta^1_3)^2 +  (\eta^1_1 -\lambda)\eta^3_3
=0,\\
*23|2
&\quad
\eta^2_3 \eta^1_3  - (\eta^3_3 +\lambda)\eta^2_1
=0,\\
*23|3
&\quad
(\eta^3_1 +\eta^1_3)\lambda  + (\eta^3_1 -\eta^1_3)\eta^3_3
=0.
\end{eqnarray*}
From $*12|1$  and $*23|3$,
\begin{equation}
\label{III}
\eta^1_1(\eta^3_1-\eta^1_3)
=-\eta^3_3(\eta^3_1-\eta^1_3).
\end{equation}
1) Suppose first that $\eta^3_1=\eta^1_3.$
Then $\lambda \eta^3_1=0.$
1.1) Consider the subcase $\eta^3_1 = 0.$
 $*13|1$ and $*13|3$ read resp.
 $(\eta^3_3-\eta^1_1)\eta^2_1=0,
 (\eta^3_3-\eta^1_1)\eta^2_3=0.$
 Suppose $\eta^3_3\neq \eta^1_1.$ Then
 $\eta^2_1= \eta^2_3 =0,$ and
 $*13|2$ gives $\eta^1_1 \lambda +1 = (\eta^1_1 -\lambda ) \eta^3_3,$
 which implies $\eta^3_3=0$  by  $*23|1.$
 As $*12|3$ then reads $1=0,$
 this case $\eta^3_3\neq \eta^1_1$ is not possible.
 Now, the case
  $\eta^3_3 = \eta^1_1$ is not possible either since then
  $*23|1$ would read $(\eta^1_1)^2 + 1 =0.$
 We conclude that the subcase 1.1) is not possible.
 Hence we are in the subcase
1.2)  $\eta^3_1 \neq  0.$ Then  $\lambda=0.$
From $*13|2$, $\eta^3_3\eta^1_1 \neq 0.$
Then $*23|1$ yields $\eta^3_3= \frac{-1+(\eta^3_1)^2}{\eta^1_1}$  and $*13|2$
reads $(\eta^2_1)^2 +(\eta^2_3)^2+2=0.$  This subcase 1.2) is not possible either.
Hence case 1) is not possible,
and  we are necessarily in the case 2)
$\eta^3_1 \neq \eta^1_3.$
From (\ref{III}), $\eta^3_3=-\eta^1_1.$
Then $*13|2$ reads
$ (\eta^1_1)^2 + (\eta^2_1)^2+   (\eta^2_3)^2 +1 + \eta^3_1 \eta^1_3 =0$
hence $\eta^3_1 \neq 0$ and
$\eta^1_3= -\frac{
(\eta^1_1)^2 + (\eta^2_1)^2+   (\eta^2_3)^2 +1}{ \eta^3_1}.$
From $*12|2,$
$\eta^2_1=\frac{\eta^2_3(\eta^1_1+\lambda)}{\eta^3_1}.$
Then $*23|2$ reads
$\eta^2_3(((\eta^2_3)^2 +\lambda^2+1)(\eta^3_1)^2+(\eta^1_1+\lambda)^2(\eta^2_3)^2)=0,$
i.e. $\eta^2_3=0,$
which implies
$\eta^2_1=0.$
Now $*12|1$ reads
$\lambda(1+(\eta^1_1)^2 -(\eta^3_1)^2)=-\eta^1_1(1+(\eta^1_1)^2 +(\eta^3_1)^2).$
The subcase $\eta^1_1 \neq 0$ is not possible
since then
 $*12|3$ would yield
 $\lambda = -\frac{(\eta^1_1)^2+(\eta^3_1)^2-1}{2\eta^1_1}$ and
 $*12|1$ would read
 $((\eta^1_1)^2+(\eta^3_1+1)^2)
 ((\eta^1_1)^2+(\eta^3_1-1)^2)=0.$
 Hence
$\eta^1_1 = 0.$ Then $*12|3$ reads $(\xi^3_1)^2=1.$
The condition $(\xi^3_1)^2=1$ now implies the vanishing of all the torsion equations.
In that case
\begin{equation*}
J_* =
\begin{pmatrix}
0&0&-\varepsilon\\
0&\lambda&0\\
\varepsilon&0&0
\end{pmatrix}
\quad,\quad \varepsilon = \pm 1.
\end{equation*}
Then
in the basis $(H,X_+,X_-)$
\begin{equation*}
J =
\begin{pmatrix}
0&-\frac{\varepsilon}{2}&-\frac{\varepsilon}{2}\\
\varepsilon&\frac{\lambda}{2}&-\frac{\lambda}{2}\\
\varepsilon&-\frac{\lambda}{2}&\frac{\lambda}{2}
\end{pmatrix}
\end{equation*}
The cases $\varepsilon =\pm 1$ are equivalent under $\Psi_0.$
\end{proof}
\begin{remark}
\rm
Recall that a
rank $r$ ($r\geqslant 1$)
$CR$-structure on a real Lie algebra
$\mathfrak{g}$ can be defined  (\cite{gigante})
as $(\mathfrak{p},J_\mathfrak{p})$ where
 $\mathfrak{p}$ is some
$2r$-dimensional vector subspace of
$\mathfrak{g}$ and
 $J_\mathfrak{p}\, : \, \mathfrak{p} \rightarrow  \mathfrak{p} $ is a linear map such that
 (a): $J_\mathfrak{p}^2=-1,$
 (b): $[X,Y] -[  J_\mathfrak{p}X,  J_\mathfrak{p}Y] \in \mathfrak{p} \quad \forall X,Y \in
 \mathfrak{p},$
 (c):
(\ref{Jnotorsion}) holds for
 $J_\mathfrak{p}$  for all $X,Y \in  \mathfrak{p}.$
 Then clearly
 $J_*(\lambda)$
is an extension of a $CR$-structure.
\end{remark}
%%%%%%%%%%%%%%%%%%%%%%%%%%%%%%%%%%%%%%%%%%%%%%%%%%%%%%%%%%%%%%%%%%%%%%%%%%%%%%%%%%%%%%%%%%%%%%%%%%%%%%%
\section{Case of
 $\mathfrak{sl}(2,\Rmath) \times  \mathfrak{sl}(2,\Rmath).$}
 We consider the basis $(Y_1^{(1)}, Y_2^{(1)}, Y_3^{(1)}, Y_1^{(2)}, Y_2^{(2)}, Y_3^{(2)})$
 of $\mathfrak{sl}(2,\Rmath) \times  \mathfrak{sl}(2,\Rmath),$
 with the upper index referring to the first or second factor.
 The automorphisms
 of $\mathfrak{sl}(2,\Rmath) \times  \mathfrak{sl}(2,\Rmath)$
 fall into 2 kinds: the first kind is comprised of
 the $\text{diag}(\Phi_1,\Phi_2)$,
  $\Phi_1,\Phi_2 \in \text{Aut} \,\mathfrak{sl}(2,\Rmath),$ and the second kind
 is comprised of the
 the $\Gamma \circ \text{diag}(\Phi_1,\Phi_2)$, with $\Gamma$ the switch between the two factors
 of $\mathfrak{sl}(2,\Rmath) \times  \mathfrak{sl}(2,\Rmath).$
\begin{lemma}
Any integrable complex structure $J$ on
 $\mathfrak{sl}(2,\Rmath) \times  \mathfrak{sl}(2,\Rmath)$
 is equivalent under some first kind automorphism
 to the endomorphism given in the basis
 $(Y_1^{(1)}, Y_2^{(1)}, Y_3^{(1)}, Y_1^{(2)}, Y_2^{(2)}, Y_3^{(2)})$
 by the matrix
 \begin{equation}
 \label{sl2xsl2bis}
\tilde{J}_*(\xi^2_2,\xi^2_5)=
\begin{pmatrix}
    0&0&-1& 0&0&0\\
    0&\xi^2_2&0& 0&\xi^2_5&0\\
    1&0&0&0&0&0 \\
    0&0&0&0&0&-1 \\
    0&-\frac{(\xi^2_2)^2+1}{\xi^2_5}&0&0&-\xi^2_2&0 \\
    0&0&0&1&0&0
    \end{pmatrix}
    , \;
    \xi^2_2, \xi^2_5 \in \Rmath
    \, , \,
    \xi^2_5 \neq 0    .
 \end{equation}
$\tilde{J}_*(\xi^2_2,\xi^2_5)$
is equivalent to
$\tilde{J}_*({\xi^{\prime}}^2_2,{\xi^{\prime}}^2_5)$
under some first (resp.  second)  kind automorphism  if and only if
$ {\xi^{\prime}}^2_2=   \xi^2_2,$
$ {\xi^{\prime}}^2_5=   \xi^2_5$
(resp.
$ {\xi^{\prime}}^2_2=  - \xi^2_2,$
$ {\xi^{\prime}}^2_5=
    -\frac{(\xi^2_2)^2+1}{\xi^2_5}$).
\end{lemma}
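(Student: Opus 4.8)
The plan is to reduce the classification to Lemma~\ref{sl2notorsion} through the block decomposition of $J$ along the product, then to squeeze the block data out of $J^2=-1$ and of the torsion equations, and finally to identify the normal form and the equivalences by controlling the residual automorphism freedom. Write, relative to $\mathfrak{sl}(2,\Rmath)^{(1)}\oplus\mathfrak{sl}(2,\Rmath)^{(2)}$,
\[
J=\begin{pmatrix} J_{11}&J_{12}\\ J_{21}&J_{22}\end{pmatrix},\qquad J_{ij}=\pi_i\circ J\circ\iota_j ,
\]
$\pi_i,\iota_i$ denoting the projections and injections. Inserting $X,Y$ both in the first (resp. second) factor into (\ref{Jnotorsion}) and projecting on that same factor, every term containing the other factor disappears and one recovers exactly (\ref{Jnotorsion}) for $J_{11}$ (resp. $J_{22}$); hence $J_{11}$ and $J_{22}$ are zero torsion maps on $\mathfrak{sl}(2,\Rmath)$. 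By Lemma~\ref{sl2notorsion}, after conjugating $J$ by a first kind automorphism $\text{diag}(\Phi_1,\Phi_2)$ --- first $\Phi_1$ to normalize $J_{11}$, then $\Phi_2$, which does not touch $J_{11}$, to normalize $J_{22}$ --- one may assume $J_{11}=J_*(a)$ and $J_{22}=J_*(c)$ in the basis $(Y_1,Y_2,Y_3)$ of each factor, with $a,c\in\Rmath$; what remains of the freedom are the $\text{diag}(\Phi_1,\Phi_2)$ with $\Phi_1,\Phi_2$ in the stabilizers of $J_*(a)$, $J_*(c)$, plus the second kind automorphisms, to be used at the end.

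I would then read off the four block identities contained in $J^2=-1$:
\[
J_*(a)J_{12}=-J_{12}J_*(c),\qquad J_{21}J_*(a)=-J_*(c)J_{21},\qquad J_{12}J_{21}=\text{diag}\big(0,-(a^2+1),0\big),\qquad J_{21}J_{12}=\text{diag}\big(0,-(c^2+1),0\big),
\]
using $J_*(a)^2=\text{diag}(-1,a^2,-1)$. An elementary entrywise discussion of the first identity shows that $J_{12}$ kills $Y_2$ off the $(Y_2,Y_2)$ slot in both rows and columns, that its $(Y_1,Y_3)$-block is a symmetric traceless $2\times 2$ matrix $\left(\begin{smallmatrix}\alpha&\beta\\ \beta&-\alpha\end{smallmatrix}\right)$, and that $(a+c)$ times the $(Y_2,Y_2)$-entry $\mu$ of $J_{12}$ vanishes; since $J_{12}J_{21}\neq 0$ forces $\Rmath Y_2\subseteq\text{Im}\,J_{12}$, one has $\mu\neq 0$, hence $c=-a$. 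The analogous statements hold for $J_{21}$.

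The crucial step is to make $\alpha$ and $\beta$ (and their $J_{21}$-counterparts) vanish. For $J_{12}$ I use the $\mathfrak{sl}(2,\Rmath)^{(2)}$-component of (\ref{Jnotorsion}) with $X,Y$ in the second factor, which reads $[J_{12}x,J_{12}y]=J_{12}\big([J_{22}x,y]+[x,J_{22}y]\big)$. A short computation shows that the $Y_2$-component of $[J_*(\lambda)x,y]+[x,J_*(\lambda)y]$ is identically $0$ (it works out to $-x_1y_1-x_3y_3+x_1y_1+x_3y_3$, with no $\lambda$), so the right-hand side has no $Y_2$-component, while with $J_{12}$ in the shape just found the left-hand side has $Y_2$-component $(\alpha^2+\beta^2)(x_3y_1-x_1y_3)$; therefore $\alpha=\beta=0$, and symmetrically for $J_{21}$. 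Writing $E_{22}$ for the projection on $Y_2$, we are left with $J_{12}=\mu E_{22}$, $J_{21}=\nu E_{22}$, $c=-a$, and $\mu\nu=-(a^2+1)$, with $\mu\neq 0$ (otherwise $J_{11}^2=-1$, impossible for a real $3\times 3$ matrix). Setting $\xi^2_2=a$ and $\xi^2_5=\mu$ gives $\nu=-\frac{(\xi^2_2)^2+1}{\xi^2_5}$ and the matrix (\ref{sl2xsl2bis}); conversely a direct substitution verifies that $\tilde J_*(\xi^2_2,\xi^2_5)$ satisfies all of (\ref{Jnotorsion}) --- including the mixed equations with $X,Y$ in different factors --- together with $J^2=-1$, so it is an integrable complex structure.

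For the equivalence part, since $J_*(\lambda)^2=\text{diag}(-1,\lambda^2,-1)$ has the two distinct eigenvalues $-1$ and $\lambda^2$, every automorphism commuting with $J_*(\lambda)$ preserves both $\Rmath Y_2$ and $\text{span}(Y_1,Y_3)$, commutes with the $90^\circ$ rotation on the latter, hence is a positive scalar times a rotation there; being an automorphism forces the scalar to be $1$ and $Y_2$ to be fixed, so the stabilizer of $J_*(\lambda)$ in $\text{Aut}\,\mathfrak{sl}(2,\Rmath)$ is exactly the circle of rotations of the $(Y_1,Y_3)$-plane that fix $Y_2$. Now if $\text{diag}(\Phi_1,\Phi_2)$ sends $\tilde J_*(\xi^2_2,\xi^2_5)$ to $\tilde J_*({\xi^{\prime}}^2_2,{\xi^{\prime}}^2_5)$, its $(1,1)$-block shows $\Phi_1$ conjugates $J_*(\xi^2_2)$ to $J_*({\xi^{\prime}}^2_2)$, whence ${\xi^{\prime}}^2_2=\xi^2_2$ by Lemma~\ref{sl2notorsion}; then $\Phi_1$ lies in the stabilizer of $J_*(\xi^2_2)$ and $\Phi_2$ in that of $J_*(-\xi^2_2)$, both fixing $Y_2$ and preserving $\text{span}(Y_1,Y_3)$, so $\Phi_1(\xi^2_5E_{22})\Phi_2^{-1}=\xi^2_5E_{22}$ and ${\xi^{\prime}}^2_5=\xi^2_5$. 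For the second kind, conjugation by the switch $\Gamma$ interchanges the four blocks and turns $\tilde J_*(\xi^2_2,\xi^2_5)$ into $\tilde J_*\!\big(-\xi^2_2,-\frac{(\xi^2_2)^2+1}{\xi^2_5}\big)$; composing with the first kind automorphisms above then yields exactly the second kind equivalences stated. I expect the annihilation of the $(Y_1,Y_3)$-blocks to be the main obstacle: the identity making the $Y_2$-component of $[J_*(\lambda)x,y]+[x,J_*(\lambda)y]$ vanish is what, together with $J^2=-1$, actually collapses the general block matrix to the two-parameter normal form.
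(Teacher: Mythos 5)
Your argument is correct, and it is worth noting that it actually \emph{supplies} the computation that the paper's proof outsources: the paper, after the same two opening moves (the diagonal blocks $J_1,J_4$ are zero torsion maps on each factor, hence normalizable to $J_*(\xi^2_2)$, $J_*(\xi^5_5)$ by Lemma \ref{sl2notorsion} via a first kind automorphism, and $\xi^5_5=-\xi^2_2$), simply refers to a REDUCE program for the passage to (\ref{sl2xsl2bis}) and for the equivalence statement. Two differences in route are worth recording. First, the paper gets $\xi^5_5=-\xi^2_2$ from $Tr(J)=0$; you get it from the $(2,2)$-entry of the intertwining relation $J_*(a)J_{12}=-J_{12}J_*(c)$ together with $\mu\neq 0$ --- both are fine, and yours has the advantage of being a by-product of the block analysis you need anyway. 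Second, and this is the real content you add, the annihilation of the symmetric traceless $(Y_1,Y_3)$-block of $J_{12}$ follows from a clean structural fact: the $Y_2$-component of $[J_*(\lambda)x,y]+[x,J_*(\lambda)y]$ vanishes identically, so the mixed torsion identity $[J_{12}x,J_{12}y]=J_{12}([J_{22}x,y]+[x,J_{22}y])$ forces $(\alpha^2+\beta^2)(x_3y_1-x_1y_3)=0$. I checked this computation and the entrywise analysis of the intertwining relation; both are right, as is your identification of the stabilizer of $J_*(\lambda)$ with the rotations of the $(Y_1,Y_3)$-plane fixing $Y_2$ (the Killing form, positive definite on that plane, is what pins the scalar to $1$), and the action of the switch $\Gamma$ on the blocks, which reproduces exactly the stated second kind equivalence.

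Two small points to fix. The equation $[J_{12}x,J_{12}y]=J_{12}([J_{22}x,y]+[x,J_{22}y])$ is the projection of (\ref{Jnotorsion}) onto the \emph{first} factor (that is where $[J_{12}x,J_{12}y]$ lives), not the $\mathfrak{sl}(2,\Rmath)^{(2)}$-component as you wrote; as stated the label would give only the zero torsion equation for $J_{22}$, which you have already used. And since your derivation only extracts necessary conditions from $J^2=-1$ and from a subset of the torsion equations, the closing sufficiency check --- that $\tilde J_*(\xi^2_2,\xi^2_5)$ satisfies \emph{all} of (\ref{Jnotorsion}), including the equations with $X$ and $Y$ in different factors --- should be spelled out rather than just asserted; it does go through (the key point being that $J_{12},J_{21}$ are multiples of the projection onto $Y_2$ and that the relevant brackets have no $Y_2$-component).
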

\begin{proof}
Let $J = (\xi^i_j)_{1 \leqslant i,j \leqslant 6}=
 \begin{pmatrix} J_1&J_2\\J_3&J_4 \end{pmatrix},$ ($J_1,J_2,J_3,J_4$  $3\times 3$ blocks),
an integrable complex structure
in the basis $(Y^{(k)}_\ell).$
From lemma \ref{sl2notorsion}, with some first kind automorphism, one may suppose
 $J_1=\begin{pmatrix} 0&0&-1\\ 0&\xi^2_2&0\\ 1&0&0 \end{pmatrix},$
 $J_4=\begin{pmatrix} 0&0&-1\\0&\xi^5_5&0\\ 1&0&0 \end{pmatrix}.$
 As $Tr(J)=0,$ $\xi^5_5=-\xi^2_2.$
 Then one is led to
 (\ref{sl2xsl2bis}) and the result follows (see \cite{companionarchive}, CSsl22.red and its output).

\end{proof}
\begin{remark}
\rm
The complex subalgebra $\mathfrak{m}$ associated  to
$\tilde{J}_*(\xi^2_2,\xi^2_5)$
has basis $\tilde{Y}^{(1)}_1 ={Y}^{(1)}_1   - i {Y}^{(1)}_3,$
$\tilde{Y}^{(2)}_1 ={Y}^{(2)}_1   - i {Y}^{(2)}_3,$
$\tilde{Y}^{(2)}_2 =-i\xi^2_5 {Y}^{(1)}_2   +(1+ i\xi^2_2) {Y}^{(2)}_2.$
The complexification
  $\mathfrak{sl}(2) \times  \mathfrak{sl}(2)$
 of $\mathfrak{sl}(2,\Rmath) \times  \mathfrak{sl}(2,\Rmath)$
 has weight spaces decomposition with respect to the Cartan subalgeba
 $\mathfrak{h}=\Cmath  {Y}^{(1)}_2 \oplus   \Cmath  {Y}^{(2)}_2 :$
 $$\mathfrak{h}\oplus  \Cmath ({Y}^{(1)}_1   + i {Y}^{(1)}_3)
 \oplus  \Cmath ({Y}^{(2)}_1   + i {Y}^{(2)}_3)
\oplus \Cmath \tilde{Y}^{(1)}_1
\oplus \Cmath \tilde{Y}^{(2)}_1.$$
Then
$\mathfrak{m}= (\mathfrak{h} \cap \mathfrak{m}) \oplus \Cmath \tilde{Y}^{(1)}_1
\oplus \Cmath \tilde{Y}^{(2)}_1$ with
$\mathfrak{h} \cap \mathfrak{m} =
 \Cmath \tilde{Y}^{(2)}_2,$ which is a special case of the general fact proved in \cite{snow}
 that any complex (integrable) structure on a reductive Lie group of class I is regular.
\end{remark}

\section{Case of $\mathfrak{n}$.}

Let   $\mathfrak{n}$  the real 3-dimensional Heisenberg Lie algebra with
basis $(x_1,x_2,x_3)$ and
commutation relations $[x_1,x_2]=x_3.$
\begin{lemma}
\label{lemma2}
 Let  $J \, : \,
 \mathfrak{n}
\rightarrow
 \mathfrak{n}
$ any linear map.
 $J$ has zero torsion
 if and only if it is equivalent to
one of the endomorphisms defined  in the basis $(x_1,x_2,x_3)$ by:
\begin{eqnarray}
\label{S}
(i) & &
S(\xi^3_3)=
\begin{pmatrix}
    0&-1&0\\
    1&0&0\\
    0&0&\xi^3_3
    \end{pmatrix}
    \quad, \quad \xi^3_3 \in \Rmath
    \\
\label{D}
(ii) & &
D(\xi^1_1)=
\begin{pmatrix}
    \xi^1_1&0&0\\
    0&\xi^1_1&0\\
    0&0&\frac{(\xi^1_1)^2-1}{2\xi^1_1}
    \end{pmatrix}  \quad , \quad \xi^1_1 \in \Rmath, \; \xi^1_1 \neq 0
\\
\label{T}
(iii) & &
T(a,b)=
\begin{pmatrix}
    0&-ab&0\\
    1&b&0\\
    0&0&\frac{ab-1}{b}
    \end{pmatrix}  \quad , \quad a, b \in \Rmath, \; b \neq 0
\end{eqnarray}
Any two distinct endomorphisms in the preceding list are non equivalent.
$T(a,b)$ is equivalent to
\begin{equation}
T^{\prime}(a,b)
=\begin{pmatrix}
    b&-b&0\\
    a&0&0\\
    0&0&\frac{ab-1}{b}
    \end{pmatrix}
\end{equation}
\end{lemma}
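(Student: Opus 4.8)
The plan is to proceed in the same spirit as the proof of Lemma~\ref{sl2notorsion}: write an arbitrary $J=(\xi^i_j)_{1\leqslant i,j\leqslant 3}$ in the basis $(x_1,x_2,x_3)$, write out the nine torsion equations $ij|k$ obtained by projecting (\ref{Jnotorsion}) on $x_k$ with $X=x_i,Y=x_j$, and exploit the fact that for the Heisenberg Lie algebra the only nonzero bracket is $[x_1,x_2]=x_3$, so most of the nine equations are trivial or very short. Concretely, since $[x_i,x_3]=0$ for all $i$, the left-hand side of (\ref{Jnotorsion}) for any pair involving $x_3$ reduces drastically, and the equation $12|k$ involves $[Jx_1,Jx_2]$, $[x_1,x_2]=x_3$, $[Jx_1,x_2]$ and $[x_1,Jx_2]$, each of which is a multiple of $x_3$; so in fact the $k=1$ and $k=2$ projections of $12|\cdot$ vanish identically and the only genuinely nontrivial torsion equations are $12|3$, $13|3$, $23|3$. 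This collapses the system to a handful of polynomial relations among the nine entries $\xi^i_j$.

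Next I would solve this small polynomial system. The natural first move is to observe that $x_3$ spans the center $\mathfrak z=[\mathfrak n,\mathfrak n]$, and to analyze how $J$ interacts with $\mathfrak z$. The key dichotomy, which the three normal forms already suggest, is whether or not $\mathfrak z$ is $J$-invariant: case $(i)$ and case $(ii)$ have $\mathfrak z$ invariant with $Jx_3=\xi^3_3 x_3$ (and the $2\times 2$ top-left block being a rotation-type block or a scalar block respectively), while case $(iii)$ is the generic situation. Using the automorphism group $\Aut{\mathfrak n}$ — which acts on $(x_1,x_2)$ by $GL(2,\Rmath)$ composed with the induced action on $x_3$ by the determinant, together with the translations sending $x_i\mapsto x_i+(\text{scalar})x_3$ for $i=1,2$ — one can normalize the induced map on $\mathfrak n/\mathfrak z\cong\Rmath^2$. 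A linear endomorphism of $\Rmath^2$ is, up to $GL(2,\Rmath)$-conjugacy, either conjugate to a rotation $\left(\begin{smallmatrix}0&-1\\1&0\end{smallmatrix}\right)$ (when it has no real eigenvalue), or to a diagonalizable or Jordan form with real eigenvalues; matching these against the surviving torsion equations $12|3,13|3,23|3$ pins down the remaining entries and yields exactly the three families $S(\xi^3_3)$, $D(\xi^1_1)$, $T(a,b)$, together with the stated constraints ($\xi^1_1\neq 0$, $b\neq 0$) coming from consistency of the equation that forces $\xi^3_3$ to equal $\frac{(\xi^1_1)^2-1}{2\xi^1_1}$ or $\frac{ab-1}{b}$.

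For the non-equivalence claims, I would compute a complete set of $\Aut{\mathfrak n}$-invariants distinguishing the orbits: the invariant $\xi^3_3$ (the eigenvalue of the induced map on $\mathfrak z$, which is intrinsic since $\mathfrak z$ is characteristic) already separates most of the list, and within a fixed value of $\xi^3_3$ one uses the conjugacy class of the induced endomorphism on $\mathfrak n/\mathfrak z$ — i.e. its trace and determinant — to separate $S$ from $D$ from $T$, and to separate $T(a,b)$ from $T(a',b')$ one checks that the pair (trace, determinant) of the $\mathfrak n/\mathfrak z$-block, namely $(b, ab)$, is a complete invariant, so $(a,b)$ is recovered. One must be a little careful that the Heisenberg automorphism group's action on the $2\times 2$ block is genuinely by \emph{conjugation} (not arbitrary $GL(2)$ equivalence) on the induced quotient map, which is what makes trace and determinant the right invariants. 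Finally, the last assertion that $T(a,b)\equiv T'(a,b)$ is a single explicit change of basis on $(x_1,x_2)$: the matrix $\left(\begin{smallmatrix}0&-ab\\1&b\end{smallmatrix}\right)$ and $\left(\begin{smallmatrix}b&-b\\a&0\end{smallmatrix}\right)$ are conjugate in $GL(2,\Rmath)$ (same trace $b$, same determinant $ab$), and one exhibits the conjugating matrix $P\in GL(2,\Rmath)$, extended to $\Aut{\mathfrak n}$ by letting it act on $x_3$ by $\det P$; one then verifies the bottom-right entry is unchanged because $\det P$ cancels in the conjugation of the scalar block. The main obstacle I anticipate is bookkeeping: making sure the translation part of $\Aut{\mathfrak n}$ (the $x_i\mapsto x_i+c_ix_3$ maps) is used to kill the off-diagonal entries $\xi^3_1,\xi^3_2$ in the right order, and checking that no spurious extra family is lost when the $\mathfrak n/\mathfrak z$-block is the rotation block (the case where $\xi^3_3$ is a genuinely free parameter) versus when it has real eigenvalues (where a torsion equation forces $\xi^3_3$ to be determined by the other data). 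This is routine but needs care — which is presumably why the author defers the detailed computation to the companion file.
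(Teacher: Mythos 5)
There is a genuine gap at the very first step: your claim that ``the $k=1$ and $k=2$ projections of $12|\cdot$ vanish identically and the only genuinely nontrivial torsion equations are $12|3$, $13|3$, $23|3$'' is false. The terms $J[Jx_1,x_2]$ and $J[x_1,Jx_2]$ are $J$ applied to multiples of $x_3$, and $Jx_3=\xi^1_3x_1+\xi^2_3x_2+\xi^3_3x_3$ has components along $x_1,x_2$ a priori; so $12|1$ and $12|2$ read $\xi^1_3(\xi^1_1+\xi^2_2)=0$ and $\xi^2_3(\xi^1_1+\xi^2_2)=0$, and, more importantly, $13|2$ and $23|1$ read $(\xi^2_3)^2=0$ and $(\xi^1_3)^2=0$. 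These last two are the crux of the whole computation: they force $\xi^1_3=\xi^2_3=0$, i.e.\ $J\mathfrak{z}\subseteq\mathfrak{z}$ for \emph{every} zero torsion $J$, after which only $12|3$ survives, in the form $\xi^3_3\,Tr(A)=\det(A)-1$ for the top-left $2\times2$ block $A$. By discarding exactly the equations that yield this, your argument never establishes the invariance of the center, and your proposed ``key dichotomy'' (whether $\mathfrak{z}$ is $J$-invariant, with case (iii) as the non-invariant generic case) rests on a misreading of the normal forms: all three of $S$, $D$, $T$ have $Jx_3=\xi^3_3x_3$. The actual case split is $Tr(A)=0$ (which with $12|3$ forces $\det A=1$, hence $A^2=-I$ and the rotation block, with $\xi^3_3$ free) versus $Tr(A)\neq0$ (where $\xi^3_3$ is determined and $A$ is either scalar, giving $D$, or similar to its companion matrix, giving $T$). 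Relatedly, your split of the quotient map into ``rotation (no real eigenvalue)'' versus ``real eigenvalues'' does not match the families: $T(1,1)$ has no real eigenvalues yet is not of type $S$.

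The rest of your architecture is sound and close to the paper's: using the translation automorphisms $x_i\mapsto x_i+c_ix_3$ to kill $\xi^3_1,\xi^3_2$ (which requires checking that $\xi^3_3$ is never an eigenvalue of $A$ in each family — true, and worth saying explicitly since the paper only says it for type $S$); the observation that $\Aut{\mathfrak{n}}$ acts on $A$ by genuine $GL(2,\Rmath)$-conjugation while fixing $\xi^3_3$, so that $(\xi^3_3,\,Tr A,\,\det A,\,\text{scalar or not})$ is a complete invariant separating the list; and the similarity of the two $2\times2$ blocks for $T\equiv T'$. But as written the proof does not get off the ground until the torsion system is solved correctly.
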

\begin{proof}
Let $J = (\xi^i_j)_{1 \leqslant i,j \leqslant 3}$
in the basis $(x_1,x_2,x_3).$
The 9 torsion equations are:
\begin{eqnarray*}
12|1
&\quad
\xi^1_3 (\xi^2_2 + \xi^1_1) =0,\\
12|2
&\quad
\xi^2_3 (\xi^2_2 + \xi^1_1) =0,\\
12|3
&\quad
 \xi^3_3 (\xi^2_2 +  \xi^1_1)-  \xi^2_2 \xi^1_1 +  \xi^2_1 \xi^1_2  +1 =0,\\
13|1
&\quad
\xi^2_3 \xi^1_3=0,\\
13|2
&\quad
(\xi^2_3)^2=0,\\
13|3
&\quad
\xi^2_3 ( \xi^3_3 - \xi^1_1) + \xi^1_2 \xi^1_3 =0,\\
23|1
&\quad
(\xi^1_3)^2=0,\\
23|2
&\quad
\xi^2_3 \xi^1_3=0,\\
23|3
&\quad
\xi^1_3 ( \xi^2_2 - \xi^3_3) - \xi^2_3 \xi^1_2 =0.
\end{eqnarray*}
Hence $\xi^1_3=\xi^2_3=0$ , and we are left only with equation
$12|3$ which reads
\begin{equation}
\xi^3_3\; Tr{(A)} =\det{(A)} -1
\end{equation}
where $A = \left(\begin{smallmatrix} \xi^1_1&\xi^1_2\\\xi^2_1&\xi^2_2             \end{smallmatrix}\right)      .$
Suppose first $Tr{(A)}=0.$ Then $A^2=-I,$ so that $A$ is similar over $\Cmath$, hence over $\Rmath$, to
 $ \left(\begin{smallmatrix} 0&-1 \\ 1&0            \end{smallmatrix}\right).$
 Hence $J \equiv \left(
\begin{smallmatrix}
    0&-1&0\\
    1&0&0\\
    *&*&\xi^3_3
    \end{smallmatrix} \right)$. Now, since $\xi^3_3$ does not belong to the spectrum of
 $ \left(\begin{smallmatrix} 0&-1 \\ 1&0            \end{smallmatrix}\right),$
    taking the automorphism
$ \left(\begin{smallmatrix} 1&0&0\\ 0&1&0\\ \alpha&\beta&1            \end{smallmatrix}\right)      $
of $\mathfrak{n}$
 for suitable $\alpha, \beta \in \Rmath,$ one gets
 $J \equiv S(    \xi^3_3  ).$
Suppose now $Tr{(A)}\neq 0.$ Then $\xi^3_3= \frac{\det{(A)}-1}{Tr{(A)}}.$ If $A$ is a scalar matrix,
i.e.
$A =  \xi^1_1 I,$ then
 $J = \left(
\begin{smallmatrix}
    \xi^1_1&0&0\\
    0&\xi^1_1&0\\
    *&*&\frac{(\xi^1_1)^2-1}{2\xi^1_1}
    \end{smallmatrix} \right)
    \equiv   D(\xi^1_1).$
    If $A$ is not a scalar matrix, then $A$ is similar to
 $ \left(\begin{smallmatrix} 0&-ab \\ 1&b            \end{smallmatrix}\right)$
 for some $a,b \in  \Rmath,$ and $b \neq 0$ from the trace.
 Then
    $J \equiv   T(a,b).$
    Finally, $T'(a,b) \equiv T(a,b)$
    since the matrices
$\left(\begin{smallmatrix} 0&-ab\\1&b \end{smallmatrix}\right)$
and
$\left(\begin{smallmatrix} b&-b\\a&0 \end{smallmatrix}\right)$
are similar for they have the same spectrum and are no scalar matrices.
\end{proof}
\begin{remark}
\rm
$S(\xi^3_3)$ is an extension of a rank 1 $CR$-structure, however
\linebreak[4]
$D(\xi^1_1),T(a,b)$ are not.

\end{remark}

\section{$CR$-structures on $\mathfrak{n}$.}
\begin{lemma}
(i)
Any linear map $J \, : \, \mathfrak{n} \rightarrow \mathfrak{n}$ which has zero torsion and is an extension of a rank 1 $CR$-structure
on  $\mathfrak{n}$ such that $\mathfrak{p}$ is nonabelian is equivalent to a unique
\begin{equation}
\begin{pmatrix}
0&-1&0\\
1&0&0\\
0&0&\xi^3_3
\end{pmatrix}
\; ,\; \xi^3_3 \in \Rmath.
\end{equation}
\\
(ii)
Any linear map $J \, : \, \mathfrak{n} \rightarrow \mathfrak{n}$ which is an extension of a rank 1 $CR$-structure
on  $\mathfrak{n}$ such that $\mathfrak{p}$ is abelian is equivalent to a unique
\begin{equation}
\begin{pmatrix}
\xi^1_1&0&0\\
0&0&1\\
0&-1&0
\end{pmatrix}
\; ,\; \xi^1_1 \in \Rmath.
\end{equation}
$J$ has  nonzero torsion.
\end{lemma}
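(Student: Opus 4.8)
The plan is to reduce everything to the two possible positions of a $2$-plane $\mathfrak{p}\subseteq\mathfrak{n}$ relative to the centre $\Rmath x_3$, and then to normalise $\mathfrak{p}$ and $J_{\mathfrak{p}}$ by automorphisms of $\mathfrak{n}$ and read off the residual freedom. The preliminary observation is that the bracket of $\mathfrak{n}$ induces the standard (nondegenerate) symplectic form on $\mathfrak{n}/\Rmath x_3$ with values in $\Rmath x_3$; hence a $2$-plane $\mathfrak{p}$ is nonabelian exactly when $x_3\notin\mathfrak{p}$ (so $\mathfrak{n}=\mathfrak{p}\oplus\Rmath x_3$), and abelian exactly when $\Rmath x_3\subseteq\mathfrak{p}$. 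I would also note at the outset that in either case every complex structure $J_{\mathfrak{p}}$ on $\mathfrak{p}$ automatically satisfies conditions (a),(b),(c) of the $CR$-structure definition: when $\mathfrak{p}$ is abelian this is trivial, and when $x_3\notin\mathfrak{p}$ one checks $[J_{\mathfrak{p}}X,J_{\mathfrak{p}}Y]=[X,Y]$ and $[J_{\mathfrak{p}}X,Y]+[X,J_{\mathfrak{p}}Y]=0$ for all $X,Y\in\mathfrak{p}$. Thus a rank $1$ $CR$-structure on $\mathfrak{n}$ is simply a $2$-plane carrying a complex structure, ``$J$ is an extension'' means $J|_{\mathfrak{p}}=J_{\mathfrak{p}}$, and the content of the lemma is the classification of such $J$ up to $\text{Aut}\,\mathfrak{n}$ (recall $\text{Aut}\,\mathfrak{n}$ is generated by the $GL(2)$ acting on $(x_1,x_2)$ with $x_3\mapsto(\det)x_3$, together with the maps $x_i\mapsto x_i+s_i x_3$).

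For part (i), $x_3\notin\mathfrak{p}$. Pick $0\neq e\in\mathfrak{p}$; since $J_{\mathfrak{p}}^2=-1$, the vector $J_{\mathfrak{p}}e$ is not proportional to $e$, so $\{e,J_{\mathfrak{p}}e\}$ is a basis of $\mathfrak{p}$ whose image is a basis of $\mathfrak{n}/\Rmath x_3$. First applying a $GL(2)$-type automorphism, then one of the form $x_i\mapsto x_i+s_i x_3$, I would bring $\mathfrak{p}$ to $\mathrm{span}(x_1,x_2)$ with $J_{\mathfrak{p}}x_1=x_2$, $J_{\mathfrak{p}}x_2=-x_1$. Then $J=\left(\begin{smallmatrix}0&-1&\xi^1_3\\1&0&\xi^2_3\\0&0&\xi^3_3\end{smallmatrix}\right)$, and the hypothesis that $J$ has zero torsion forces $\xi^1_3=\xi^2_3=0$ through the torsion equations $23|1$ and $13|2$ of Lemma~\ref{lemma2} (which read $(\xi^1_3)^2=0$ and $(\xi^2_3)^2=0$); hence $J=S(\xi^3_3)$. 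Uniqueness is immediate: $\xi^3_3$ is the only real eigenvalue of $S(\xi^3_3)$ (the other two being $\pm i$), an invariant of the equivalence class; conversely each $S(\xi^3_3)$ is of the required type, with $\mathfrak{p}=\mathrm{span}(x_1,x_2)$ nonabelian since $[x_1,x_2]=x_3$.

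For part (ii), $\Rmath x_3\subseteq\mathfrak{p}$. Put $w:=J_{\mathfrak{p}}x_3\in\mathfrak{p}$; then $w\notin\Rmath x_3$ and $J_{\mathfrak{p}}w=-x_3$, so $\{x_3,w\}$ is a basis of $\mathfrak{p}$. Using a $GL(2)$-type automorphism and then an $x_3$-translation (which fixes the centre), I would bring $\mathfrak{p}$ to $\mathrm{span}(x_2,x_3)$ with $J_{\mathfrak{p}}x_3=x_2$, $J_{\mathfrak{p}}x_2=-x_3$, so that $J=\left(\begin{smallmatrix}\xi^1_1&0&0\\\xi^2_1&0&1\\\xi^3_1&-1&0\end{smallmatrix}\right)$. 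It remains to annihilate $\xi^2_1,\xi^3_1$: the automorphisms $x_1\mapsto x_1+fx_2+gx_3$, $x_2\mapsto x_2$, $x_3\mapsto x_3$ fix $\mathfrak{p}$ pointwise (hence preserve $J_{\mathfrak{p}}$), and a short computation shows they change $\left(\begin{smallmatrix}\xi^2_1\\\xi^3_1\end{smallmatrix}\right)$ by $\left(\begin{smallmatrix}\xi^1_1&-1\\1&\xi^1_1\end{smallmatrix}\right)\left(\begin{smallmatrix}f\\g\end{smallmatrix}\right)$, where the matrix $\left(\begin{smallmatrix}\xi^1_1&-1\\1&\xi^1_1\end{smallmatrix}\right)$ has determinant $(\xi^1_1)^2+1\neq0$. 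Choosing $f,g$ suitably gives the announced normal form. Again $\xi^1_1$ is the unique real eigenvalue, so the representative is unique; and this $J$ has nonzero torsion, since its $(2,3)$-entry equals $1$ so the equation $13|2$ ($(\xi^2_3)^2=0$) fails --- equivalently, it is none of the matrices $S$, $D$, $T$ of Lemma~\ref{lemma2}.

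All of the above is routine once the positions of $\mathfrak{p}$ are sorted out; the one place that needs genuine care is the final reduction step in part (ii), namely verifying that the automorphisms of $\mathfrak{n}$ still available after $(\mathfrak{p},J_{\mathfrak{p}})$ has been normalised are enough to kill both entries $\xi^2_1,\xi^3_1$. This is precisely where the nonvanishing of $(\xi^1_1)^2+1$ enters, and it explains why a single real parameter $\xi^1_1$ remains.
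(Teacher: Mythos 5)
Your argument is correct, and part (i) is essentially the paper's proof: normalise $(\mathfrak{p},J_{\mathfrak{p}})$ so that $J$ becomes $\left(\begin{smallmatrix}0&-1&\xi^1_3\\1&0&\xi^2_3\\0&0&\xi^3_3\end{smallmatrix}\right)$ (the paper does this via the adapted basis $(X,J_{\mathfrak{p}}X,\mu x_3)$ with $[X,J_{\mathfrak{p}}X]=\mu x_3$), then use the torsion equations $23|1$ and $13|2$ to force $\xi^1_3=\xi^2_3=0$. In part (ii) you take a genuinely different route for the final reduction. You first normalise $\mathfrak{p}=\mathrm{span}(x_2,x_3)$ with $J_{\mathfrak{p}}x_3=x_2$, and then kill the residual entries $\xi^2_1,\xi^3_1$ by the stabilising automorphisms $x_1\mapsto x_1+fx_2+gx_3$, the key point being that these shift $\left(\begin{smallmatrix}\xi^2_1\\\xi^3_1\end{smallmatrix}\right)$ by $\left(\begin{smallmatrix}\xi^1_1&-1\\1&\xi^1_1\end{smallmatrix}\right)\left(\begin{smallmatrix}f\\g\end{smallmatrix}\right)$ with determinant $(\xi^1_1)^2+1\neq 0$; your computation here is right. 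The paper instead picks a real eigenvector $y_1$ of the full extension $J$ (it exists since $\dim\mathfrak{n}=3$, and cannot lie in $\mathfrak{p}$ because $J_{\mathfrak{p}}^2=-1$), notes $[y_1,Jx_3]=\lambda x_3$ with $\lambda\neq 0$ since otherwise $y_1$ would be central, and passes directly to the adapted basis $(y_1/\lambda,\,Jx_3,\,x_3)$ --- one step, with no stabiliser computation. Your version is more mechanical but makes transparent why exactly one real parameter survives; the paper's is shorter and exhibits $\xi^1_1$ from the outset as the eigenvalue. Your uniqueness argument (the unique real eigenvalue is a conjugation invariant) and the nonzero-torsion check via $13|2$ are both sound, and your preliminary observation that conditions (a)--(c) are automatic for any complex structure on a $2$-plane of $\mathfrak{n}$ is a correct (and useful) supplement showing that every listed matrix really does occur.
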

\begin{proof}
For any nonzero $X \in \mathfrak{p},$ $(X,J_\mathfrak{p}X)$ is a basis of $\mathfrak{p}$.
In case (i),
$[X,J_\mathfrak{p}X]\neq 0,$ since  $\mathfrak{p}$ is non abelian. Then
$[X,J_\mathfrak{p}X]= \mu x_3$ , $\mu \neq 0$,  and $x_3 \not \in  \mathfrak{p}$
since otherwise
$\mathfrak{p}$ would be abelian. One may extend $J_\mathfrak{p}$ to
$\mathfrak{n}$ in the basis
$(X,J_\mathfrak{p}X, \mu x_3)$
as
\begin{equation}
J=\begin{pmatrix}
0&-1&\xi^1_3\\
1&0&\xi^2_3\\
0&0&\xi^3_3
\end{pmatrix}
\end{equation}
and $J$ has zero torsion only if $\xi^1_3=\xi^2_3=0.$
In case (ii), necessarily $x_3 \in \mathfrak{p}$ since $\mathfrak{p}$ is abelian.
Hence $(x_3, J_\mathfrak{p}x_3)$ is a basis for $\mathfrak{p}.$
Take any linear extension $J$ of $J_\mathfrak{p}$ to $\mathfrak{n}$.
There exists some eigenvector
$y_1\neq 0$  of $J$ associated to some eigenvalue $\xi^1_1 \in \Rmath.$
Then $y_1 \not \in \mathfrak{p},$
which implies $[y_1,Jx_3] \neq 0,$ for otherwise
$y_1$ would commute to the whole of $\mathfrak{n}$ and then be some multiple of $x_3 \in \mathfrak{p}.$
Hence
$[y_1,Jx_3]= \lambda x_3 $, $\lambda \neq 0,$ and dividing $y_1$ by $\lambda$ one may suppose
$\lambda=1.$ In the  basis $y_1, y_2= Jx_3, y_3=x_3$
one has
\begin{equation}
J=\begin{pmatrix}
\xi^1_1&0&0\\
0&0&1\\
0&-1&0
\end{pmatrix}
\end{equation}
and (ii) follows.
\end{proof}

\section{Complex structures on $\mathfrak{n}\times \mathfrak{n}$.}
We will use for commutation relations
$[x_1,x_2]=x_5,
[x_3,x_4]=x_6.$
The automorphisms of
 $\mathfrak{n}\times \mathfrak{n}$ fall into 2 kinds. The first kind is comprised
of the matrices
%\begin{equation}
\begin{multline}
\Phi = \left(\begin{array}{cccc|cc}
b^1_1&b^1_2&0&0&0&0\\
b^2_1&b^2_2&0&0&0&0\\
0&0&b^3_3&b^3_4&0&0\\
0&0&b^4_3&b^4_4&0&0\\
\hline
b^5_1&b^5_2&b^5_3&b^5_4 & b^1_1 b^2_2 -b^1_2 b^2_1& 0\\
b^6_1&b^6_2&b^6_3&b^6_4 & 0 &b^3_3 b^4_4 -b^3_4 b^4_3
       \end{array}
       \right)
       \quad , \quad \\
(b^1_1 b^2_2 -b^1_2 b^2_1)
(b^3_3 b^4_4 -b^3_4 b^4_3) \neq 0.
%\end{equation}
\end{multline}
The second kind ones are $\Psi = \Theta \Phi $ where $\Phi$ is
first kind  and
\begin{equation}
\Theta = \left(\begin{array}{cccc|cc}
0&0&1&0&0&0\\
0&0&0&1&0&0\\
1&0&0&0&0&0\\
0&1&0&0&0&0\\
\hline
0&0&0&0&0&1\\
0&0&0&0&1&0
       \end{array}
       \right).
\end{equation}

\begin{lemma}
Any integrable complex structure $J$ on
 $\mathfrak{n}\times \mathfrak{n}$
 is equivalent under some first kind automorphism to one of the following:
 \begin{equation}
 \label{case1}
(i) \quad
\tilde{S}_\varepsilon(\xi^5_5)=
\begin{pmatrix}
    0&-1&0& 0&0&0\\
    1&0&0& 0&0&0\\
    0&0&0&-1&0&0 \\
    0&0&1&0&0&0 \\
    0&0&0&0&\xi^5_5&-\varepsilon((\xi^5_5)^2+1) \\
    0&0&0&0&\varepsilon&-\xi^5_5
    \end{pmatrix}
    , \;
    \varepsilon =\pm 1
    \, , \,
    \xi^5_5 \in \Rmath
    .
 \end{equation}
$\tilde{S}_{\varepsilon^{\prime}} ({\xi^{\prime}}^5_5)$
is equivalent to
$\tilde{S}_\varepsilon(\xi^5_5)$
($\varepsilon,\varepsilon^{\prime}= \pm1; {\xi^{\prime}}^5_5,      \xi^5_5 \in \Rmath$)
under some first (resp.  second)  kind automorphism  if and only if
$\varepsilon^{\prime}= \varepsilon , \, {\xi^{\prime}}^5_5=   \xi^5_5$
(resp. $\varepsilon^{\prime}= -\varepsilon ,\,  {\xi^{\prime}}^5_5=  - \xi^5_5$ ).

% \begin{equation}
 \begin{multline}
(ii)    \quad
 \label{case2}
\tilde{D}(\xi^1_1)=
\begin{pmatrix}
    \xi^1_1&0&-((\xi^1_1)^2+1)& 0&0&0\\
    0&\xi^1_1&0&-((\xi^1_1)^2+1)& 0&0\\
    1&0&-\xi^1_1&0&0&0 \\
    0&1&0&-\xi^1_1&0&0 \\
    0&0&0&0&\frac{(\xi^1_1)^2-1}{2\xi^1_1}&-\frac{((\xi^1_1)^2+1)^2}{2\xi^1_1} \\
    0&0&0&0&\frac{1}{2\xi^1_1}&\frac{1-(\xi^1_1)^2}{2\xi^1_1}
    \end{pmatrix}
    ,\\
    \; \xi^1_1 \in \Rmath\setminus \{0\}.
 \end{multline}
 %\end{equation}
$\tilde{D}({\xi^{\prime}}^1_1)$
is equivalent to
$\tilde{D}(\xi^1_1)$
($ {\xi^{\prime}}^1_1,      \xi^1_1 \in \Rmath$)
under some first (resp.  second)  kind automorphism  if and only if
$ {\xi^{\prime}}^1_1=   \xi^1_1$
(resp. $ {\xi^{\prime}}^1_1=  - \xi^1_1$ ).
 \begin{multline}
(iii)    \quad
 \label{case3}
\tilde{T}(\xi^3_3,\xi^4_3)= \\
\begin{pmatrix}
    0&-\xi^4_3\xi^3_3&-\xi^4_3\xi^3_3&\xi^4_3\xi^3_3-1&0&0\\
    1&-\xi^3_3&-\frac{(\xi^3_3)^2+1-\xi^4_3 \xi^3_3}{\xi^3_3}&\xi^3_3& 0&0\\
    0&\xi^3_3&\xi^3_3&-\xi^3_3& 0&0\\
    1&0&\xi^4_3&0&0&0 \\
    0&0&0&0&-\frac{\xi^4_3\xi^3_3-1}{\xi^3_3}&-\frac{(\xi^4_3\xi^3_3-2)\xi^4_3\xi^3_3+(\xi^3_3)^2+1}{(\xi^3_3)^2} \\
    0&0&0&0&1&\frac{\xi^4_3\xi^3_3-1}{\xi^3_3}
    \end{pmatrix},
    \\
     \xi^3_3 \in \Rmath\setminus \{0\} , \;
    \xi^4_3 \in \Rmath.
 \end{multline}
$\tilde{T}({\xi^{\prime}}^3_3,{\xi^{\prime}}^4_3)$
is equivalent to
$\tilde{T}(\xi^3_3,\xi^4_3)$
($
{\xi^{\prime}}^3_3,
     \xi^3_3 \in \Rmath\setminus \{0\} , \;
 {\xi^{\prime}}^4_3,
    \xi^4_3 \in \Rmath.$)
under some first (resp.  second)  kind automorphism  if and only if
$ {\xi^{\prime}}^3_3=   \xi^3_3, \, {\xi^{\prime}}^4_3=   \xi^4_3$
(resp.
$ {\xi^{\prime}}^3_3=  - \xi^3_3, \, {\xi^{\prime}}^4_3= -  \xi^4_3$).
\par
Finally, the cases (i),(ii), (iii) are mutually non equivalent, either under first
or second kind automorphism.
\end{lemma}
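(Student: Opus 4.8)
The plan is to run, essentially verbatim, the argument already used above for $\mathfrak{sl}(2,\Rmath)\times\mathfrak{sl}(2,\Rmath)$. Put $\mathfrak h=\mathfrak n\times\mathfrak n$ and write $J=\left(\begin{smallmatrix}J_1&J_2\\J_3&J_4\end{smallmatrix}\right)$ in $3\times3$ blocks for the splitting $\mathfrak h=\mathfrak n_{(1)}\times\mathfrak n_{(2)}$, the first factor spanned by $x_1,x_2,x_5$ and the second by $x_3,x_4,x_6$. Restricting the torsion identity (\ref{Jnotorsion}) to pairs of vectors in $\mathfrak n_{(1)}$ and projecting the result onto $\mathfrak n_{(1)}$ shows at once that the diagonal block $J_1$ is a zero torsion linear map on $\mathfrak n$, and likewise $J_4$; so by Lemma \ref{lemma2}, after conjugating $J$ by a first kind automorphism $\text{diag}(\Phi_1,\Phi_2)$ with $\Phi_1,\Phi_2\in\Aut{\mathfrak n}$, we may assume that $J_1$ and $J_4$ are each one of the normal forms $S(\cdot),D(\cdot),T(\cdot,\cdot)$ (or the variant $T^{\prime}$) of that lemma. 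The integrability condition $J^2=-1$ yields the four block identities $J_1^2+J_2J_3=-1$, $J_4^2+J_3J_2=-1$, $J_1J_2+J_2J_4=0$, $J_3J_1+J_4J_3=0$, and $Tr\,J=0$ gives $Tr\,J_1=-Tr\,J_4$; together with the remaining, mixed torsion equations these constrain the three unknown blocks severely.

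It is worth recording first the structural fact that $J$ always preserves the characteristic ideal $Z:=[\mathfrak h,\mathfrak h]=Z(\mathfrak h)=\Rmath x_5\oplus\Rmath x_6$: indeed $[\mathfrak m,\mathfrak m]\subseteq\mathfrak m\cap Z_{\Cmath}$, and one checks that $\mathfrak m\cap Z_{\Cmath}$ is always exactly a line $\Cmath w$ (it is nonzero — the nonabelian case being immediate and the abelian case following from the nonexistence of a $3$-dimensional abelian subalgebra of $\mathfrak h_{\Cmath}$ disjoint from $Z_{\Cmath}$ — and at most a line since $\mathfrak m\cap\bar{\mathfrak m}=0$), whence $Z_{\Cmath}=\Cmath w\oplus\Cmath\bar w$ is a sum of eigenspaces of $J_{\Cmath}$ and $Z$ is $J$-stable. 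Consequently $J$ descends to a complex structure on $\mathfrak h/Z\cong\Rmath^4$ and restricts to one on $Z\cong\Rmath^2$, and all the content of (\ref{Jnotorsion}) sits in the $Z$-component of the torsion; this makes the case division below manageable. In the last stage one also uses the automorphisms of $\mathfrak h$ with free lower-left block (the entries $b^5_j,b^6_j$ above), which act trivially on $Z$ and on $\mathfrak h/Z$ but shift the off-diagonal parts of $J$, to clear the remaining entries of $J_2$ and $J_3$.

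Carrying this out according to the $\Aut{\mathfrak n}$-type of $J_1$ produces the three families. The key qualitative output — not obvious a priori, since $J_2,J_3$ need not be invertible — is that $J_4$ has the same type as $J_1$ and is $\Aut{\mathfrak n}$-conjugate to the form with parameter(s) negated; type $S$ then gives (\ref{case1}), type $D$ gives (\ref{case2}), type $T$ gives (\ref{case3}). In (\ref{case1}) the free sign $\varepsilon=\pm1$ survives because, once $J_1=S(\xi^5_5)$, $J_4=S(-\xi^5_5)$ and the residual automorphism freedom are used up, the remaining equations pin $J_2,J_3$ only to within the simultaneous change $J_2\mapsto-J_2$, $J_3\mapsto-J_3$; one checks, moreover, that family (\ref{case1}) is precisely the set of abelian complex structures on $\mathfrak h$. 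The $S$ case can be done by hand; the $D$ and especially the $T$ case lead to a sizeable polynomial system (for $T$, $J_1=T(a,b)$ has a nontrivial generalized eigenspace through which $J_2,J_3$ couple the two factors), which I would discharge with the companion computer algebra file (\cite{companionarchive}). I expect the main obstacle to be that $T$-type computation, together with the bookkeeping needed to be sure that no branch is lost — in particular that no mixed-type pair $(J_1,J_4)$ occurs and that nothing is overlooked.

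Finally, for the (in)equivalence statements: a first kind automorphism acts on the normal forms only through $\Aut{\mathfrak n}$ on $J_1$ and on $J_4$ separately (the lower-row shifts leaving these forms unchanged), so by Lemma \ref{lemma2} it fixes the type of $J_1$ and its parameters ($\xi^5_5$, resp. $\xi^1_1$, resp. $(\xi^3_3,\xi^4_3)$), and it fixes $\varepsilon$ since that is a sign invariant of $J_2$; hence within each family a first kind automorphism fixes all parameters. A second kind automorphism is $\Theta\,\text{diag}(\Phi_1,\Phi_2)$ with $\Theta$ the switch of the two factors; conjugation by it interchanges the roles of $J_1$ and $J_4$ up to $\Aut{\mathfrak n}$, hence carries each family to itself while applying to the parameters the substitution coming from the ``other factor'' — $\varepsilon\mapsto-\varepsilon$, $\xi^5_5\mapsto-\xi^5_5$ in (i); $\xi^1_1\mapsto-\xi^1_1$ in (ii); $(\xi^3_3,\xi^4_3)\mapsto(-\xi^3_3,-\xi^4_3)$ in (iii) — which is exactly the claimed list, and tracking the same invariants shows there are no further identifications. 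The three families are pairwise inequivalent under automorphisms of either kind, because the $\Aut{\mathfrak n}$-type of $J_1$ (equal to that of $J_4$) is $S$, $D$ or $T$ accordingly, and this type is preserved by both kinds.
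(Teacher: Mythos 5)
Your proposal follows essentially the same route as the paper: restrict the torsion identity to each factor to see that the diagonal blocks are zero torsion linear maps on $\mathfrak{n}$, normalize them via Lemma \ref{lemma2}, rule out mixed types using $J^2=-1$, discharge the residual polynomial system with the companion computer algebra files, and obtain the first/second kind equivalences from the $\Aut{\mathfrak{n}}$-invariants of the diagonal blocks together with conjugation by the factor switch $\Theta$. The only addition is your (correct) observation that $J$ preserves the centre $\Rmath x_5\oplus\Rmath x_6$, which the paper does not state explicitly.
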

\begin{proof}
Let $J = (\xi^i_j)_{1 \leqslant i,j \leqslant 6}$ an integrable complex structure
in the basis $(x_k)_{1\leqslant k \leqslant 6}.$
Denote
$J_1=\left(\begin{smallmatrix}
    \xi^1_1&\xi^1_2\\
    \xi^2_1&\xi^2_2
    \end{smallmatrix} \right)
    $,
$J_2=\left(\begin{smallmatrix}
    \xi^1_3&\xi^1_4\\
    \xi^2_3&\xi^2_4
    \end{smallmatrix} \right)
    $,
$J_3=\left(\begin{smallmatrix}
    \xi^3_1&\xi^3_2\\
    \xi^4_1&\xi^4_2
    \end{smallmatrix} \right)
    $,
$J_4=\left(\begin{smallmatrix}
    \xi^3_3&\xi^3_4\\
    \xi^4_3&\xi^4_4
    \end{smallmatrix} \right)
    $.
Then
$J_1^*=\left(\begin{smallmatrix}
    \xi^1_1&\xi^1_2&\xi^1_5\\
    \xi^2_1&\xi^2_2&\xi^2_5\\
    \xi^5_1&\xi^5_2&\xi^5_5
    \end{smallmatrix} \right)
    $
    and
$J_3^*=\left(\begin{smallmatrix}
    \xi^3_1&\xi^3_4&\xi^3_6\\
    \xi^4_3&\xi^4_4&\xi^4_6\\
    \xi^6_3&\xi^6_4&\xi^6_6
    \end{smallmatrix} \right) $
    are zero torsion linear maps from $\mathfrak{n}$ to
    $\mathfrak{n},$ hence equivalent to  type
    (\ref{S}),    (\ref{D}) or     (\ref{T})     in lemma \ref{lemma2}.
    It can be checked that their being of different types would contradict with
    $J^2=-1.$
    Hence, modulo equivalence
under some  first kind automorphism,
we get 3 cases:
case 1:
$J_1^*=
\left(\begin{smallmatrix}
    0&-1&0\\
    1&0&0\\
    0&0&\xi^5_5
    \end{smallmatrix} \right)
    $,
$J_3^*=
\left(\begin{smallmatrix}
    0&-1&0\\
    1&0&0\\
    0&0&\xi^6_6
    \end{smallmatrix} \right) ;
    $
case 2: $J_1^*=D(\xi^1_1), J_3^* =D(\xi^3_3)$,
($\xi^1_1, \,\xi^3_3  \neq 0$);
case 3:
$J_1^*=\left(\begin{smallmatrix}
    0&\xi^1_2&0\\
    1&\xi^2_2&0\\
    0&0&\xi^5_5
    \end{smallmatrix} \right)
    $,
$J_3^*=\left(\begin{smallmatrix}
    \xi^3_3&-\xi^3_3&0\\
    \xi^4_3&0&0\\
    0&0&\xi^6_6
    \end{smallmatrix} \right)
    $,
($\xi^2_2, \,\xi^3_3  \neq 0$).
Case 1 (resp. 2, 3) leads to
(\ref{case1}) (resp.
(\ref{case2}),
(\ref{case3}))
(see  \cite{companionarchive}, programs
"n2case1.red", "n2case2.red", "n2case3.red", and their outputs.)
The assertion about equivalence in cases 1,2 are readily proved, as is equivalence under
some first kind automorphism in case 3 and the nonequivalence of the 3 types.
Consider now $\Theta \tilde{T}(\xi^3_3,\xi^4_3) \Theta^{-1}.$
It is equivalent under some first kind automorphism to some
$\tilde{T}(\eta^3_3,\eta^4_3).$  That implies that the matrices
$\left(\begin{smallmatrix}
    \xi^3_3&-\xi^3_3\\
    \xi^4_3&0
    \end{smallmatrix} \right)    $,
$\left(\begin{smallmatrix}
    0&-\eta^4_3\eta^3_3\\
    1&-\xi^3_3
    \end{smallmatrix} \right)    $
    are similar, which amounts to their having same trace and same determinant, i.e.
    $\eta ^3_3= -\xi^3_3, \eta^4_3=-\xi^4_3.$
 As
$\tilde{T}({\xi^{\prime}}^3_3, {\xi^{\prime}}^4_3)$ is equivalent
to
$ \tilde{T}(\xi^3_3,\xi^4_3)$
under some second kind automorphism if and only if
it is equivalent to
$\Theta \tilde{T}(\xi^3_3,\xi^4_3) \Theta^{-1}$ under some first kind automorphism,
the assertion about second kind equivalence in case 3 follows.
\end{proof}
\begin{remark}
\rm
In case 3, had we used
$J_3^*=\left(\begin{smallmatrix}
    0&\xi^3_4&0\\
    1&\xi^4_4&0\\
    0&0&\xi^6_6
    \end{smallmatrix} \right)
    $,
    then we would have to separate further into 2 subcases:
    subcase $\xi^1_2 \neq 0$:
    \begin{multline*}
\tilde{T}(\xi^1_2,\xi^2_2)=
\begin{pmatrix}
    0&\xi^1_2&-\frac{\xi^2_2}{\xi^1_2}&-(\xi^1_2+1)&0&0\\
    1&\xi^2_2& \frac{\xi^1_2+1}{\xi^1_2}&-\xi^2_2& 0&0\\
    0&-\xi^1_2&0&\xi^1_2& 0&0\\
    1&\xi^2_2&1&-\xi^2_2& 0&0\\
    0&0&0&0&-\frac{\xi^1_2+1}{\xi^2_2}&-\frac{(\xi^2_2)^2+(\xi^1_2+1)^2}{\xi^2_2\xi^1_2}\\
    0&0&0&0&\frac{\xi^1_2}{\xi^2_2}&\frac{\xi^1_2+1}{\xi^2_2}
    \end{pmatrix},                \quad
     \xi^1_2 \xi^2_2 \neq 0  \, ;
    \end{multline*}
     subcase $\xi^1_2 = 0$:
    \begin{multline*}
\tilde{T}(\xi^2_2)=
\begin{pmatrix}
    0&0&-1&0&0&0\\
    1&\xi^2_2&0&1& 0&0\\
    1&0&0&0& 0&0\\
    -\xi^2_2&-((\xi^2_2)^2+1)&1&-\xi^2_2& 0&0\\
    0&0&0&0&-\frac{1}{\xi^2_2}&\frac{1}{\xi^2_2}\\
    0&0&0&0&-\frac{(\xi^2_2)^2+1}{\xi^2_2}&\frac{1}{\xi^2_2}
    \end{pmatrix},                \quad
     \xi^2_2 \neq 0  \, .
    \end{multline*}
\end{remark}
\begin{remark}
\rm
$\tilde{S}_\varepsilon(\xi^5_5)$
is abelian.
\end{remark}

\begin{remark}
\rm
If one looks for zero torsion linear maps instead of complex structures, then
$J_1^*$ and $J_3^*$ may be of different types.
\end{remark}

%%%%%%%%%%%%%%%%%%%%%%%%%%%%%%%%%%%%%%%%%%%%%%%%%%%%%%%%%%%%%%%%%%%%%%%%%%%%%%%%%%%%%%%%%%

%%%%%%%%%%%%%%%%%%%%%%%%%%%%%%%%%%%%%%%%%%%%%%%%%%%%%%%%%%%%%%%%%%%%%%%%%%%%%%%%%%%%%%%%%%
\end{document}